\newtheorem{theorem}{Theorem}[section]
\newtheorem{proposition}{Proposition}[section]
\newtheorem{lemma}{Lemma}[section]
\newtheorem{corollary}{Corollary}[section]
\newtheorem{remark}{Remark}[section]
\newcommand{\R}{\mathbb{R}}
\newcommand{\h}{\mathbb{H}}
\newcommand{\ria}{\rightarrow}
\newcommand{\n}{\nabla}
\newcommand{\ran}{\rangle}
\newcommand{\lan}{\langle}
\DeclareMathOperator{\di}{div}
\DeclareMathOperator{\tr}{tr}
\numberwithin{equation}{section}
\title[Monotonicity formula for complete hypersurfaces]{Monotonicity formula for complete hypersurfaces in the hyperbolic space and applications}
\author{Hil\'ario Alencar \and Greg\'orio Silva Neto}
\date{March 31, 2015}
\address{Instituto de Matemática\\
Universidade Federal de Alagoas\\
Macei\'o, AL, 57072-900, Brasil\\}
\email{hilario@mat.ufal.br}
\address{Instituto de Matemática\\
Universidade Federal de Alagoas\\ 
Macei\'o, AL, 57072-900, Brasil\\}
\email{gregorio@im.ufal.br}
\begin{document}
\subjclass[2010]{Primary 53C42; Secondary 53C40}

\keywords{mean curvature, scalar curvature, hyperbolic space, monotonicity}

\footnotetext{Hil\'ario Alencar was partially supported by CNPq of Brazil.}

\begin{abstract}
In this paper we prove a monotonicity formula for the integral of the mean curvature for complete and proper hypersurfaces of the hyperbolic space and, as consequences, we obtain a lower bound for the integral of the mean curvature and that the integral of the mean curvature is infinity.
\end{abstract}
\maketitle
\section{Introduction and main results}

Let $\h^{n+1}(\kappa)$ be the $(n+1)-$dimensional hyperbolic space with constant sectional curvature $\kappa<0.$ The main result of this paper is the following
\begin{theorem}[Monotonicity]\label{Main-Theorem}
Let $M^n, \ n\geq3,$ be a complete and proper hypersurface of $\h^{n+1}(\kappa)$ with mean curvature $H>0.$ If there exists a constant $\Gamma\geq0$ such that scalar curvature $R$ satisfies $\kappa\leq R\leq \dfrac{\Gamma}{n-1}H +\kappa,$ then the function $\varphi:\R\ria\R$ defined by
\[
\varphi(r)=\dfrac{e^{\frac{\Gamma}{2}r}}{(\sinh \sqrt{-\kappa}r)^{\frac{n-1}{2}}}\int_{M\cap B_r}(\sinh\sqrt{-\kappa}\rho)HdM
\] 
is monotone non decreasing, where $\rho$ is the geodesic distance function of $\h^{n+1}(\kappa)$ starting at $p\in\h^{n+1}(\kappa)$ and $B_r=B_r(p)$ denotes the geodesic open ball of $\h^{n+1}(\kappa)$ with center $p\in\h^{n+1}(\kappa)$ and radius $r.$ Moreover, if $\Gamma<(n-3)\sqrt{-\kappa},$ then \[\displaystyle{\int_M HdM=\infty.}\]
\end{theorem}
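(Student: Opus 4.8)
The plan is to convert the scalar-curvature hypothesis into a two-sided pinching of the second mean curvature and then to build a first-order differential inequality for the mass $I(r)=\int_{M\cap B_r}(\sinh\sqrt{-\kappa}\,\rho)H\,dM$ whose integrating factor is exactly the prefactor defining $\varphi$. Throughout put $c=\sqrt{-\kappa}$, let $A$ be the shape operator and $\nu$ a unit normal, and normalize so that $\tr A=nH$ with $H>0$. For a hypersurface of the space form $\h^{n+1}(\kappa)$ the Gauss equation gives $(nH)^2-|A|^2=n(n-1)(R-\kappa)=n(n-1)H_2$, where $H_2$ is the normalized second mean curvature. Hence the assumption $\kappa\le R\le\frac{\Gamma}{n-1}H+\kappa$ is equivalent to
\[
0\le H_2\le\frac{\Gamma}{n-1}H,
\]
and, together with $H>0$, places $M$ in the closure of the G{\aa}rding cone $\Gamma_2^{+}$; consequently the first Newton tensor $P_1=nH\,\mathrm{Id}-A$ is positive semidefinite, with $\tr P_1=n(n-1)H$, $\tr(P_1A)=n(n-1)H_2$, and $0\le P_1\le (\tr P_1)\,\mathrm{Id}=n(n-1)H\,\mathrm{Id}$.

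Because $\h^{n+1}(\kappa)$ has constant curvature, $P_1$ is divergence free on $M$, so for any radial $u=u(\rho)$ one has $\di_M(P_1\nabla^M u)=\tr(P_1\hess_M u)$. Using the ambient identity $\overline{\hess}\,\rho=c\coth(c\rho)(\overline g-d\rho\otimes d\rho)$, the Gauss relation $\hess_M\rho=(\overline{\hess}\,\rho)|_{TM}+\langle\nabla\rho,\nu\rangle A$, and the chain rule $\hess_M u=u''\,d\rho\otimes d\rho+u'\hess_M\rho$, the choice $u'=\sinh(c\rho)$ (so that $u''=c\coth(c\rho)u'$) makes the $\langle P_1\nabla^M\rho,\nabla^M\rho\rangle$ contributions cancel and yields, with $X=\sinh(c\rho)P_1\nabla^M\rho$,
\[
\di_M X=n(n-1)\big[c\cosh(c\rho)H+\langle\nabla\rho,\nu\rangle\sinh(c\rho)H_2\big].
\]
Integrating over $\Omega_r=M\cap B_r$ and applying the divergence theorem for a.e.\ regular value $r$ (outward conormal $\eta=\nabla^M\rho/|\nabla^M\rho|$) gives
\[
n(n-1)c\int_{\Omega_r}\cosh(c\rho)H\,dM+n(n-1)E(r)=\sinh(cr)\int_{\partial\Omega_r}\frac{\langle P_1\nabla^M\rho,\nabla^M\rho\rangle}{|\nabla^M\rho|}\,ds,
\]
where the error $E(r)=\int_{\Omega_r}\langle\nabla\rho,\nu\rangle\sinh(c\rho)H_2\,dM$ obeys $|E(r)|\le\frac{\Gamma}{n-1}I(r)$ by the pinching of $H_2$ and $|\langle\nabla\rho,\nu\rangle|\le1$.

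Now I assemble a differential inequality. Since $\cosh\ge\sinh$ and $H>0$ one has $\int_{\Omega_r}\cosh(c\rho)H\ge I(r)$, so the left-hand side is $\ge n[(n-1)c-\Gamma]I(r)$; since $0\le P_1\le n(n-1)H\,\mathrm{Id}$, the right-hand side is $\le n(n-1)\sinh(cr)\int_{\partial\Omega_r}H|\nabla^M\rho|\,ds$. On the other hand, the coarea formula gives $I'(r)=\sinh(cr)\int_{\partial\Omega_r}\frac{H}{|\nabla^M\rho|}\,ds$. The half-powers appearing in $\varphi$ are expected to emerge from the Cauchy--Schwarz inequality
\[
\Big(\int_{\partial\Omega_r}H\,ds\Big)^{2}\le\Big(\int_{\partial\Omega_r}\frac{H}{|\nabla^M\rho|}\,ds\Big)\Big(\int_{\partial\Omega_r}H|\nabla^M\rho|\,ds\Big),
\]
which couples $I'(r)$ to the boundary term above; combining these three estimates is designed to produce
\[
\frac{I'(r)}{I(r)}\ \ge\ \frac{(n-1)c}{2}\coth(cr)-\frac{\Gamma}{2}.
\]
Integrating from $s$ to $r$ and exponentiating, and using $\int\frac{(n-1)c}{2}\coth(cr)\,dr=\frac{n-1}{2}\log\sinh(cr)$ together with $\int\frac{\Gamma}{2}\,dr=\frac{\Gamma}{2}r$, gives $\varphi(s)\le\varphi(r)$, i.e.\ monotonicity. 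I expect the \emph{main obstacle} to be exactly this last assembly: verifying that the divergence identity, the sign and size of the $P_1$-boundary term, and the Cauchy--Schwarz step combine with the precise constants so that the integrating factor is $e^{\Gamma r/2}(\sinh cr)^{-(n-1)/2}$ and not merely a comparable weight --- in particular, tracking how the factor $1/2$ and the exponent $(n-1)/2$ arise demands careful bookkeeping of $\int_{\partial\Omega_r}H|\nabla^M\rho|$ against $\int_{\partial\Omega_r}H/|\nabla^M\rho|$.

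For the final assertion, assume $\Gamma<(n-3)c$ (so necessarily $n\ge4$) and suppose, for contradiction, that $\int_M H\,dM=:C<\infty$. Properness makes each $\Omega_r$ relatively compact, so $\{\Omega_r\}$ exhausts $M$; completeness and $H>0$ give $I(r_0)>0$ for some $r_0$, whence $\varphi(r_0)>0$. Monotonicity then yields, for all large $r$,
\[
I(r)\ \ge\ \varphi(r_0)\,e^{-\Gamma r/2}(\sinh cr)^{(n-1)/2}\ \ge\ C_0\,e^{\frac12[(n-1)c-\Gamma]r},
\]
using $\sinh(cr)\sim\tfrac12 e^{cr}$ with a constant $C_0>0$. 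On the other hand $\sinh(c\rho)\le\tfrac12 e^{cr}$ on $\Omega_r$ forces $I(r)\le\tfrac12 e^{cr}\int_{\Omega_r}H\,dM\le\tfrac12 C\,e^{cr}$. Comparing exponential rates, the lower bound outgrows the upper bound precisely when $\tfrac12[(n-1)c-\Gamma]>c$, that is $\Gamma<(n-3)c=(n-3)\sqrt{-\kappa}$, which is exactly the hypothesis; letting $r\to\infty$ produces a contradiction, so $\int_M H\,dM=\infty$.
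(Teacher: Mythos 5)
Your setup (the divergence identity for $X=\sinh(c\rho)P_1\nabla^M\rho$, which is exactly the paper's Proposition 2.2 with $f\equiv1$, applied on sublevel sets via the divergence theorem rather than with cutoff functions) is sound, and your final blow-up argument is fine. But the step you yourself flagged as the main obstacle --- the ``assembly'' --- contains a genuine gap, and it is precisely where the theorem's constants live. Your only upper bound on $P_1$ is $P_1\le(\operatorname{tr}P_1)\,\mathrm{Id}=n(n-1)H\,\mathrm{Id}$. Carrying that through your own scheme gives
\[
I'(r)\;\ge\;\frac{1}{n(n-1)}\Bigl[n(n-1)c\coth(cr)-n\Gamma\Bigr]I(r)\;=\;\Bigl[c\coth(cr)-\tfrac{\Gamma}{n-1}\Bigr]I(r),
\]
i.e.\ monotonicity of $e^{\Gamma r/(n-1)}(\sinh cr)^{-1}I(r)$: exponent $1$ instead of $\tfrac{n-1}{2}$ and coefficient $\tfrac{\Gamma}{n-1}$ instead of $\tfrac{\Gamma}{2}$. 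For $n\ge4$ this is strictly weaker and does not yield the theorem at all: it only gives $\int_{M\cap B_r}H\,dM\gtrsim e^{-\Gamma r/(n-1)}$, which stays bounded, so the second assertion (which is vacuous unless $n\ge4$) cannot be recovered. The Cauchy--Schwarz inequality you propose cannot repair this: it relates the three boundary integrals of $H/|\nabla^M\rho|$, $H$, and $H|\nabla^M\rho|$ to one another, but the deficit is in the \emph{pointwise} comparison of $\lan P_1\nabla^M\rho,\nabla^M\rho\ran$ with $H$, and no rearrangement of boundary integrals can improve a pointwise operator bound.

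The missing idea, which is the crux of the paper's Lemma 2.3, is a sharper eigenvalue estimate available under $R\ge\kappa$: the Gauss equation gives $|A|^2=n^2H^2-n(n-1)(R-\kappa)\le n^2H^2$, hence every eigenvalue of $P_1$ satisfies $\lambda_i=nH-k_i\le nH+|A|\le 2nH$, i.e.\ $P_1\le 2nH\,\mathrm{Id}$. Replacing $n(n-1)$ by $2n$ in the boundary estimate produces the ratio $\tfrac{n(n-1)}{2n}=\tfrac{n-1}{2}$, and this is exactly where the exponent $\tfrac{n-1}{2}$ and the factor $\tfrac{\Gamma}{2}$ come from (note $2n=n(n-1)$ only when $n=3$, which is why your constants look plausible in the lowest dimension). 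A secondary, fixable slip in the same step: you bound the bulk term using $\cosh\ge\sinh$, which loses the factor $\coth(cr)$; you need $\cosh(c\rho)\ge\coth(cr)\sinh(c\rho)$ on $B_r$ (valid since $\coth$ is decreasing), as in the paper, because monotonicity against the weight $(\sinh cr)^{-(n-1)/2}$ is strictly stronger than against $e^{-(n-1)cr/2}$. With these two corrections your route (divergence theorem on $M\cap B_r$ at regular values of $\rho|_M$, plus the coarea formula) does work and is a legitimate variant of the paper's cutoff-function argument.
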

The monotonicity of Theorem \ref{Main-Theorem} above implies the following estimate for the integral of mean curvature:
\begin{corollary}
Let $M^n, \ n\geq3,$ be a complete and proper hypersurface of $\h^{n+1}(\kappa)$ with mean curvature $H>0.$ If there exists a constant $\Gamma\geq0$ such that the scalar curvature $R$ satisfies $\kappa\leq R\leq \dfrac{\Gamma}{n-1}H +\kappa,$ then
\[
\int_{M\cap B_r} H dM \geq (\sinh\sqrt{-\kappa}r)^{\frac{n-3}{2}}\int_{r_0}^r Ce^{-\frac{\Gamma}{2}\tau}d\tau
\]
for all $r>r_0,$ where $C=C(r_0,M,p)$ is a constant depending only on $r_0, M$ and $p.$
\end{corollary}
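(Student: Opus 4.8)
The plan is to integrate the monotonicity of $\varphi$ supplied by Theorem~\ref{Main-Theorem}. Abbreviate $f(t)=\int_{M\cap B_t}(\sinh\sqrt{-\kappa}\rho)H\,dM$ and $A(t)=\int_{M\cap B_t}H\,dM$, so that $\varphi(t)=e^{\frac{\Gamma}{2}t}(\sinh\sqrt{-\kappa}t)^{-\frac{n-1}{2}}f(t)$. Because $\varphi$ is non-decreasing, for every $t\ge r_0$ we have $\varphi(t)\ge\varphi(r_0)$, which I rewrite as the lower bound
\[
f(t)\ge\varphi(r_0)\,e^{-\frac{\Gamma}{2}t}(\sinh\sqrt{-\kappa}t)^{\frac{n-1}{2}}.
\]
The number $\varphi(r_0)$ is a fixed positive constant: it is a positive multiple of $\int_{M\cap B_{r_0}}(\sinh\sqrt{-\kappa}\rho)H\,dM$, which is strictly positive since $H>0$ and $M\cap B_{r_0}$ has positive measure. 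This is the quantity that will become $C=C(r_0,M,p)$.

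Next I would convert the weighted integral $f$ into the unweighted integral $A$ by means of the coarea formula. Since $\sinh\sqrt{-\kappa}\rho$ is constant, equal to $\sinh\sqrt{-\kappa}t$, on the sphere $\partial B_t$, differentiating both integrals along the exhaustion $\{B_t\}$ gives $f'(t)=(\sinh\sqrt{-\kappa}t)\,A'(t)$, that is, $A'(t)=f'(t)/\sinh\sqrt{-\kappa}t$. Writing $f(t)=e^{-\frac{\Gamma}{2}t}(\sinh\sqrt{-\kappa}t)^{\frac{n-1}{2}}\varphi(t)$, differentiating, and using $\varphi'\ge0$ together with $\varphi(t)\ge\varphi(r_0)$, I would obtain the pointwise differential inequality
\[
A'(t)\ge\varphi(r_0)\,e^{-\frac{\Gamma}{2}t}(\sinh\sqrt{-\kappa}t)^{\frac{n-3}{2}}\left(\frac{n-1}{2}\sqrt{-\kappa}\coth\sqrt{-\kappa}t-\frac{\Gamma}{2}\right).
\]

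Finally I would integrate this inequality from $r_0$ to $r$, discard the nonnegative boundary term $A(r_0)$, and then perform the elementary estimates on the hyperbolic factors needed to put the right-hand side in the stated closed form: controlling the factor $(\sinh\sqrt{-\kappa}t)^{\frac{n-3}{2}}$ and the bracketed coefficient so that $(\sinh\sqrt{-\kappa}r)^{\frac{n-3}{2}}$ may be extracted and the remaining integrand reduced to $Ce^{-\frac{\Gamma}{2}\tau}$, with all fixed factors absorbed into $C=C(r_0,M,p)$.

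The main obstacle is precisely this last step. One must first check the sign of the coefficient $\frac{n-1}{2}\sqrt{-\kappa}\coth\sqrt{-\kappa}t-\frac{\Gamma}{2}$ on $[r_0,\infty)$ (since $\coth\ge1$ it is bounded below by $\frac{n-1}{2}\sqrt{-\kappa}-\frac{\Gamma}{2}$, and this is where a smallness restriction on $\Gamma$ enters) so that the substitution $\varphi(t)\ge\varphi(r_0)$ does not reverse the inequality; and one must then carry out the hyperbolic bookkeeping that converts the $t$-dependent weight inside the integral into the clean factor $(\sinh\sqrt{-\kappa}r)^{\frac{n-3}{2}}$ outside it. The computation of $\varphi(r_0)$ and of the elementary integrals is then routine.
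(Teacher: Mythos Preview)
The paper does not write out a separate proof of this corollary; the relevant estimate is obtained inside the proof of Theorem~\ref{Main-Theorem}, in the passage showing $\int_M H\,dM=\infty$. That argument is far more direct than yours: one simply combines the monotonicity $\varphi(r)\geq\varphi(r_0)$ with the elementary inequality
\[
\int_{M\cap B_r}(\sinh\sqrt{-\kappa}\rho)\,H\,dM\ \leq\ (\sinh\sqrt{-\kappa}r)\int_{M\cap B_r}H\,dM,
\]
which holds because $\sinh$ is increasing and $\rho\leq r$ on $B_r$. This yields immediately
\[
\int_{M\cap B_r}H\,dM\ \geq\ \varphi(r_0)\,(\sinh\sqrt{-\kappa}r)^{\frac{n-3}{2}}\,e^{-\frac{\Gamma}{2}r},
\]
with $\varphi(r_0)=C(r_0,M,p)$. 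No coarea formula, no differentiation of $\varphi$, and no sign condition on any auxiliary coefficient are needed.

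Your route through $A'(t)=f'(t)/\sinh(\sqrt{-\kappa}t)$ and the product rule for $\varphi$ is correct algebra, but the two obstacles you flag at the end are genuine and not mere bookkeeping. First, the bracket $\tfrac{n-1}{2}\sqrt{-\kappa}\coth\sqrt{-\kappa}t-\tfrac{\Gamma}{2}$ is nonnegative only when $\Gamma\leq(n-1)\sqrt{-\kappa}\coth\sqrt{-\kappa}t$; replacing $\varphi(t)$ by $\varphi(r_0)$ on that term therefore already forces a smallness hypothesis on $\Gamma$ that is absent from the statement. Second, after integrating you face
\[
\int_{r_0}^r (\sinh\sqrt{-\kappa}\tau)^{\frac{n-3}{2}}\,e^{-\frac{\Gamma}{2}\tau}\Bigl(\tfrac{n-1}{2}\sqrt{-\kappa}\coth\sqrt{-\kappa}\tau-\tfrac{\Gamma}{2}\Bigr)\,d\tau,
\]
and since $n\geq 3$ the weight $(\sinh\sqrt{-\kappa}\tau)^{\frac{n-3}{2}}$ is \emph{increasing} in $\tau$: it cannot be bounded from below by its value at the upper endpoint $\tau=r$, so the factor $(\sinh\sqrt{-\kappa}r)^{\frac{n-3}{2}}$ cannot be pulled outside the integral as you propose. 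That step simply fails, and what you call ``hyperbolic bookkeeping'' is not routine. The paper sidesteps both issues by never differentiating $A$ at all and instead using the pointwise bound on $\sinh\sqrt{-\kappa}\rho$ to pass from $f(r)$ to $A(r)$ in one line.
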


\begin{remark}
{\normalfont
In this direction, we can cite the following result of H. Alencar, W. Santos and D. Zhou, see \cite{A-S-Z}, proved in the context of higher order curvatures, whose version for mean curvature we state below.

{\it Let $\overline{M}^{n+1}(\kappa)$ be an $(n+1)-$dimensional, simply connected, complete Riemannian manifold with constant sectional curvature $\kappa,$ and let $M^n$ be a complete, noncompact, properly immersed hypersuface of $\overline{M}^{n+1}(\kappa).$ Assume there exists a nonnegative constant $\alpha$ such that
\[
|R-\kappa|\leq \alpha H.
\]
If $P_1=nHI-A$ is positive semidefinite, where $I:TM\ria TM$ is the identity map, then for any $q\in M$ such that $H(q)\neq0$ and any $\mu_0>0,$ there exists a positive constant $C,$ depending only on $\mu_0,$ $q$ and $M$ such that, for every $\mu\geq\mu_0,$
\[
\int_{M\cap \overline{B}_\mu(p)} HdM\geq \int_{\mu_0}^{\mu} Ce^{-\alpha \tau}d\tau,
\]
where $\overline{B}_\mu(p)$ is the closed ball of radius $\mu$ and center $q\in\overline{M}^{n+1}(\kappa).$ In particular, if $\kappa\leq0,$ $R=\kappa,$ $H\geq0$ and $H\not\equiv0,$ then $\displaystyle{\int_M H dM =\infty.}$}
}
\end{remark}

\emph{Acknowledgements.} The authors would like to thank the anonymous referee for his/her valuable comments.

\section{Preliminary results}

Let $\h^{n+1}(\kappa)$ be the $(n+1)-$dimensional hyperbolic space with constant sectional curvature $\kappa.$ 

Let $A:TM\ria TM$ be the linear operator associated to the second fundamental form of the immersion. The first Newton transformation $P_1:TM\ria TM$ is defined by
\[
P_1=nHI-A,
\]
where $I:TM\ria TM$ is the identity map.

Notice that, since $A$ is self-adjoint, then $P_1$ is also a self-adjoint linear operator. Denote by $k_1,k_2,\ldots, k_n$ the eigenvalues of the operator $A,$ also called principal curvatures of the immersion. Since $P_1$ is a self-adjoint operator, we can consider its eigenvalues $\lambda_1,\lambda_2,\ldots,\lambda_n$ given by $\lambda_i = nH - k_i,$ $i=1,2,\ldots,n.$

If $H>0$ and $R\geq\kappa,$ then $P_1$ is semi-positive definite. This fact is known, and can be found in \cite{AdCS}, remark 2.1, p.552. We include a proof here for the sake of completeness. If $R\geq\kappa,$ then $(nH)^2 = |A|^2 + n(n-1)(R-\kappa)\geq k_i^2, \ \mbox{for all}\ i=1,2,\ldots,n.$ Thus $0\leq (nH)^2 - k_i^2 = (nH - k_i)(nH + k_i)$ which implies that all eigenvalues of $P_1$ are non-negative, provided $H\geq0,$ i.e., $P_1$ is semi-positive definite. Let us denote by $\overline{\n}$ and $\n$ the connections of $\h^{n+1}(\kappa)$ and $M,$ respectively. In order to prove our main theorem we will need the next two results.

\begin{lemma}\label{lemma-hessian}
Let $x:M^n\ria\h^{n+1}(\kappa)$ be an isometric immersion, $\rho(x)=\rho(p,x)$ be the geodesic distance function of $\h^{n+1}(\kappa)$ starting at $p\in \h^{n+1}(\kappa),$ and $\overline{X}=\dfrac{\sinh\sqrt{-\kappa}\rho}{\sqrt{-\kappa}}\overline{\n}\rho$ the position vector of $\h^{n+1}(\kappa),$ where $\overline{\n}\rho$ denotes the gradient of $\rho$ on $\h^{n+1}(\kappa).$ Then, for every $q\in M,$
\[
\tr(E\longmapsto P_1((\overline{\n}_E\overline{X})^T))(q)= n(n-1)H(q)(\cosh\sqrt{-\kappa}\rho(q)).
\]
\end{lemma}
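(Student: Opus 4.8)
The plan is to reduce the statement to the computation of a single ambient covariant derivative together with the trace of $P_1$. First I would establish the clean identity
\[
\overline{\n}_E\overline{X}=(\cosh\sqrt{-\kappa}\rho)\,E
\]
for \emph{every} vector field $E$ along $M$, which is the hyperbolic analogue of the Euclidean fact $\overline{\n}_E X=E$ for the position vector. To do this, I would differentiate the definition $\overline{X}=\frac{\sinh\sqrt{-\kappa}\rho}{\sqrt{-\kappa}}\,\overline{\n}\rho$ by the product rule,
\[
\overline{\n}_E\overline{X}=E\!\left(\tfrac{\sinh\sqrt{-\kappa}\rho}{\sqrt{-\kappa}}\right)\overline{\n}\rho+\tfrac{\sinh\sqrt{-\kappa}\rho}{\sqrt{-\kappa}}\,\overline{\n}_E\overline{\n}\rho,
\]
and treat the two terms separately. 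The first term equals $(\cosh\sqrt{-\kappa}\rho)\langle E,\overline{\n}\rho\rangle\,\overline{\n}\rho$, since $E(\rho)=\langle E,\overline{\n}\rho\rangle$. For the second term I would invoke the Hessian of the distance function in a space of constant curvature $\kappa$, which in $\h^{n+1}(\kappa)$ is \emph{exactly}
\[
\overline{\hess}\,\rho(E,Z)=\sqrt{-\kappa}\coth(\sqrt{-\kappa}\rho)\bigl(\langle E,Z\rangle-\langle E,\overline{\n}\rho\rangle\langle Z,\overline{\n}\rho\rangle\bigr),
\]
so that $\overline{\n}_E\overline{\n}\rho=\sqrt{-\kappa}\coth(\sqrt{-\kappa}\rho)\bigl(E-\langle E,\overline{\n}\rho\rangle\overline{\n}\rho\bigr)$. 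Multiplying by $\frac{\sinh\sqrt{-\kappa}\rho}{\sqrt{-\kappa}}$ turns the coefficient into $\cosh\sqrt{-\kappa}\rho$, and when the two terms are added the radial contributions cancel, leaving precisely $(\cosh\sqrt{-\kappa}\rho)E$.

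With this identity in hand the rest is immediate. Restricting to $E\in TM$, the vector $\overline{\n}_E\overline{X}=(\cosh\sqrt{-\kappa}\rho)E$ is already tangent to $M$, so its tangential projection is itself: $(\overline{\n}_E\overline{X})^T=(\cosh\sqrt{-\kappa}\rho)E$. Hence, for an orthonormal frame $\{E_i\}$ of $TM$,
\[
\tr\bigl(E\longmapsto P_1((\overline{\n}_E\overline{X})^T)\bigr)=\sum_{i=1}^n\langle P_1((\cosh\sqrt{-\kappa}\rho)E_i),E_i\rangle=(\cosh\sqrt{-\kappa}\rho)\,\tr(P_1),
\]
using the linearity of $P_1$ and that $\cosh\sqrt{-\kappa}\rho$ is a scalar. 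Finally I would compute $\tr(P_1)=\tr(nHI-A)=n\cdot nH-\tr(A)=n^2H-nH=n(n-1)H$, since $\tr(A)=nH$. Substituting yields exactly $n(n-1)H\cosh\sqrt{-\kappa}\rho$, as claimed.

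The only genuinely substantive input is the exact Hessian formula for the distance function in constant curvature; everything else is algebra. The point to be careful about is that this formula is an identity, not merely an inequality as in comparison theorems, precisely because $\h^{n+1}(\kappa)$ has constant sectional curvature, and that the cancellation of the radial terms is what produces the strikingly simple expression $(\cosh\sqrt{-\kappa}\rho)E$. I expect no essential obstacle beyond correctly tracking the $\sqrt{-\kappa}$ normalizations throughout.
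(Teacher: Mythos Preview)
Your proof is correct. Both you and the paper rely on the same essential input, the exact Hessian formula for the distance function in constant curvature, but you package the computation more efficiently. The paper chooses an eigenbasis $\{e_i\}$ of $P_1$, decomposes each $e_i=b_i\gamma'+c_iY_i$ into radial and orthogonal parts, and then computes the four inner products $\lan\overline{\n}_{\gamma'}\overline{X},\gamma'\ran$, $\lan\overline{\n}_{\gamma'}\overline{X},Y_i\ran$, $\lan\overline{\n}_{Y_i}\overline{X},\gamma'\ran$, $\lan\overline{\n}_{Y_i}\overline{X},Y_i\ran$ separately before reassembling. You instead establish the single vector identity $\overline{\n}_E\overline{X}=(\cosh\sqrt{-\kappa}\rho)\,E$ in one stroke, which makes the eigenbasis and the radial decomposition unnecessary and reduces the lemma to the elementary fact $\tr P_1=n(n-1)H$. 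The gain is purely in economy: the paper's route makes the cancellation of cross terms explicit, while yours hides that cancellation inside the vector identity, but neither argument requires anything the other does not.
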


\begin{proof}
Let $\gamma$ be the only unit geodesic of $\h^{n+1}(\kappa)$ going from $p$ to $q.$ Let $\left\{e_1(q),e_2(q),\right.$ $\ldots,\left.e_n(q)\right\}$ a basis of $T_qM$ made by eigenvectors of $P_1,$ i.e., $P_1(e_i(q))\!=\!\lambda_i(q)e_i(q),$ where $\lambda_i, \ i=1,\ldots,n,$ are the eigenvalues of $P_1.$ Writing $e_i=b_i\gamma' + c_i Y_i,$ where $\|Y_i\|=1$ and $\lan\gamma', Y_i\ran=0,$ we have $b_i^2+c_i^2=1,$ and
\[
\begin{split}
\sum_{i=1}^n \lan\overline{\n}_{e_i}\overline{X},P_1(e_i)\ran&=\sum_{i=1}^n\lambda_i\lan \overline{\n}_{e_i}\overline{X},e_i\ran = \sum_{i=1}^n\lambda_i\lan\overline{\n}_{b_i\gamma' + c_i Y_i}\overline{X},b_i\gamma' + c_i Y_i\ran\\
&=\sum_{i=1}^n\lambda_i[b_i^2\lan\overline{\n}_{\gamma'}\overline{X},\gamma'\ran + b_i c_i\lan \overline{\n}_{\gamma'}\overline{X},Y_i\ran \\
&\qquad + b_ic_i \lan\overline{\n}_{Y_i}\overline{X},\gamma'\ran + c_i^2\lan\overline{\n}_{Y_i}\overline{X},Y_i\ran].\\
\end{split}
\]
On the other hand,
\[
\begin{split}
\lan \overline{\n}_{\gamma'}\overline{X},\gamma'\ran &= \left\lan \overline{\n}_{\gamma'}\left(\dfrac{\sinh\sqrt{-\kappa}\rho}{\sqrt{-\kappa}}\gamma'\right),\gamma'\right\ran\\
&= \left\lan\gamma'\left(\dfrac{\sinh\sqrt{-\kappa}\rho}{\sqrt{-\kappa}}\right)\gamma' +\left(\dfrac{\sinh\sqrt{-\kappa}\rho}{\sqrt{-\kappa}}\right) \overline{\n}_{\gamma'}\gamma', \gamma'\right\ran\\
& = (\cosh\sqrt{-\kappa}\rho)\lan \gamma',\gamma'\ran = \cosh\sqrt{-\kappa}\rho,
\end{split}
\]
\[
\begin{split}
\lan \overline{\n}_{\gamma'}\overline{X},Y_i\ran &= \left\lan \overline{\n}_{\gamma'}\left(\dfrac{\sinh\sqrt{-\kappa}\rho}{\sqrt{-\kappa}}\gamma'\right),Y_i\right\ran\\
&= (\cosh\sqrt{-\kappa}\rho)\lan\gamma',Y_i\ran + \left(\dfrac{\sinh\sqrt{-\kappa}\rho}{\sqrt{-\kappa}}\right)\lan\overline{\n}_{\gamma'}\gamma',Y_i\ran\\
&=0,
\end{split}
\]
\[
\begin{split}
\lan \overline{\n}_{Y_i}\overline{X},\gamma'\ran &= \left\lan \overline{\n}_{Y_i}\left(\dfrac{\sinh\sqrt{-\kappa}\rho}{\sqrt{-\kappa}}\gamma'\right),\gamma'\right\ran\\
&= Y_i\left(\dfrac{\sinh\sqrt{-\kappa}\rho}{\sqrt{-\kappa}}\right)\lan\gamma',\gamma'\ran + \left(\dfrac{\sinh\sqrt{-\kappa}\rho}{\sqrt{-\kappa}}\right)\lan\overline{\n}_{Y_i}\gamma',\gamma'\ran\\
&=\left\lan\overline{\n}\left(\dfrac{\sinh\sqrt{-\kappa}\rho}{\sqrt{-\kappa}}\right),Y_i\right\ran + \dfrac{1}{2}\left(\dfrac{\sinh\sqrt{-\kappa}\rho}{\sqrt{-\kappa}}\right) Y_i\lan\gamma',\gamma'\ran\\
& = \left\lan\overline{\n}\left(\dfrac{\sinh\sqrt{-\kappa}\rho}{\sqrt{-\kappa}}\right),Y_i\right\ran\\
& = (\cosh\sqrt{-\kappa}\rho)\lan\gamma',Y_i\ran=0,
\end{split}
\]
\[
\begin{split}
\lan \overline{\n}_{Y_i}\overline{X},Y_i\ran &= \left\lan \overline{\n}_{Y_i}\left(\dfrac{\sinh\sqrt{-\kappa}\rho}{\sqrt{-\kappa}}\gamma'\right),Y_i\right\ran\\
&= Y_i\left(\dfrac{\sinh\sqrt{-\kappa}\rho}{\sqrt{-\kappa}}\right)\lan\gamma',Y_i\ran + \left(\dfrac{\sinh\sqrt{-\kappa}\rho}{\sqrt{-\kappa}}\right)\lan\overline{\n}_{Y_i}\gamma',Y_i\ran\\
&=\left(\dfrac{\sinh\sqrt{-\kappa}\rho}{\sqrt{-\kappa}}\right)\lan\overline{\n}_{Y_i}\overline{\n}\rho,Y_i\ran.\\
\end{split}
\]
Since
\[
\lan\overline{\n}_U \overline{\n}\rho, V\ran = \sqrt{-\kappa}(\coth\sqrt{-\kappa}\rho)\left(\lan U,V\ran - \lan \overline{\n}\rho, U \ran \lan \overline{\n}\rho, V \ran \right),
\]
for any vector fields $U,V\in T\h^{n+1}(\kappa),$ see \cite{J-K}, p. 713, and \cite{A-F}, p. 6, we have
\[
\begin{split}
\sum_{i=1}^n \lan\overline{\n}_{e_i}&\overline{X},P_1(e_i)\ran= \sum_{i=1}^n\lambda_i[b_i^2(\cosh\sqrt{-\kappa}\rho)+ c_i^2\left(\dfrac{\sinh\sqrt{-\kappa}\rho}{\sqrt{-\kappa}}\right)\lan\overline{\n}_{Y_i}\overline{\n}\rho,Y_i\ran]\\
&=\sum_{i=1}^n\lambda_ib_i^2(\cosh\sqrt{-\kappa}\rho)\\
&+\sum_{i=1}^n\lambda_ic_i^2\dfrac{\sinh\sqrt{-\kappa}\rho}{\sqrt{-\kappa}}\sqrt{-\kappa}(\coth\sqrt{-\kappa}\rho)(\lan Y_i,Y_i\ran + \lan\overline{\n}\rho,Y_i\ran\lan\overline{\n}\rho,Y_i\ran)\\
&=(\cosh\sqrt{-\kappa}\rho)\sum_{i=1}^n\lambda_i[b_i^2+c_i^2]=(\cosh\sqrt{-\kappa}\rho)\sum_{i=1}^n\lambda_i\\
&=n(n-1)H(\cosh\sqrt{-\kappa}\rho).
\end{split}
\]
\end{proof}
\begin{proposition}\label{prop1}
Let $x:M^n\ria\h^{n+1}(\kappa)$ be an isometric immersion, $\rho(x)=\rho(p,x)$ be the geodesic distance function of $\h^{n+1}(\kappa)$ starting at $p\in \h^{n+1}(\kappa),$ and $\overline{X}=\dfrac{\sinh\sqrt{-\kappa}\rho}{\sqrt{-\kappa}}\overline{\n}\rho$ the position vector of $\h^{n+1}(\kappa),$ where $\overline{\n}\rho$ denotes the gradient of $\rho$ on $\h^{n+1}(\kappa).$ If $f:M\ria\R$ is any smooth function, then
\[
\di(P_1(fX^T)) = \lan\overline{X},P_1(\n f)\ran + n(n-1)fH(\cosh\sqrt{-\kappa}\rho) + n(n-1)(R-\kappa)f\lan\overline{X},\eta\ran,
\]
where $\n f$ denotes the gradient of $f$ on $M,$ $X^T= \overline{X}-\lan\overline{X},\eta\ran\eta$ is the component of $\overline{X}$ tangent to $M$ and $\eta$ is the unit normal vector field of the immersion.
\end{proposition}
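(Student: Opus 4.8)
The plan is to evaluate the divergence in a local orthonormal frame $\{e_1,\ldots,e_n\}$ of $TM$, writing $Z=fX^T$ and expanding by the Leibniz rule for the self-adjoint operator $P_1$:
\[
\di(P_1 Z)=\sum_{i=1}^n\lan(\n_{e_i}P_1)(Z),e_i\ran+\sum_{i=1}^n\lan P_1(\n_{e_i}Z),e_i\ran.
\]
The first move is to kill the first sum. Since each $\n_{e_i}P_1$ is self-adjoint, it equals $\lan Z,\sum_i(\n_{e_i}P_1)(e_i)\ran=\lan Z,\di P_1\ran$, so it suffices to show $\di P_1=0$. Because $\h^{n+1}(\kappa)$ has constant curvature, the Codazzi equation reduces to $(\n_X A)Y=(\n_Y A)X$ (the ambient curvature term involves $\lan\cdot,\eta\ran$ of tangent vectors and hence vanishes); tracing this symmetry gives $\sum_i(\n_{e_i}A)(e_i)=\n(\tr A)=n\n H$, whence $\di P_1=n\n H-n\n H=0$.

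Having discarded that term, I would split the surviving sum via $\n_{e_i}Z=(e_i f)X^T+f\n_{e_i}X^T$ and self-adjointness of $P_1$ into
\[
\sum_i(e_i f)\lan X^T,P_1 e_i\ran+f\sum_i\lan\n_{e_i}X^T,P_1 e_i\ran.
\]
The first piece reduces to $\lan P_1 X^T,\n f\ran=\lan X^T,P_1(\n f)\ran$; since $P_1(\n f)$ is tangent and $\overline{X}-X^T$ is normal, this equals $\lan\overline{X},P_1(\n f)\ran$, the first term of the claim.

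For the second piece the key is the decomposition $\overline{X}=X^T+\lan\overline{X},\eta\ran\eta$. Differentiating with $\overline{\n}_{e_i}$ and combining the Gauss formula for $\overline{\n}_{e_i}X^T$ with the Weingarten formula $\overline{\n}_{e_i}\eta=-Ae_i$ yields, after taking tangential parts, $\n_{e_i}X^T=(\overline{\n}_{e_i}\overline{X})^T+\lan\overline{X},\eta\ran Ae_i$. Consequently
\[
\sum_i\lan\n_{e_i}X^T,P_1 e_i\ran=\tr\big(E\mapsto P_1((\overline{\n}_E\overline{X})^T)\big)+\lan\overline{X},\eta\ran\tr(AP_1).
\]
Now Lemma \ref{lemma-hessian} evaluates the first trace as $n(n-1)H\cosh\sqrt{-\kappa}\rho$, while the Gauss equation recalled in the preliminaries gives $\tr(AP_1)=(nH)^2-|A|^2=n(n-1)(R-\kappa)$. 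Multiplying by $f$ and adding the three contributions reproduces the asserted identity.

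The main obstacle I anticipate is the sign- and projection-bookkeeping in this last step, namely verifying that the Gauss and Weingarten formulas combine to give $\n_{e_i}X^T=(\overline{\n}_{e_i}\overline{X})^T+\lan\overline{X},\eta\ran Ae_i$ with the correct orientation; once this and the identity $\di P_1=0$ are in hand, the proposition follows by mechanically substituting Lemma \ref{lemma-hessian} and the Gauss relation.
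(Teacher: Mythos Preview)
Your proposal is correct and follows essentially the same approach as the paper: both arguments rest on $\di P_1=0$, the Gauss-equation identity $\tr(AP_1)=n(n-1)(R-\kappa)$, and Lemma~\ref{lemma-hessian}, with the only difference being organizational---the paper computes the trace $\tr\big(E\mapsto P_1((\overline{\nabla}_E f\overline{X})^T)\big)$ in two ways and equates, whereas you start from $\di(P_1(fX^T))$ and expand, but the underlying decomposition via $\overline{X}=X^T+\lan\overline{X},\eta\ran\eta$ and the Gauss--Weingarten formulas is identical.
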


\begin{proof}
Let $\{e_1,e_2,\ldots,e_n\}$ be an adapted orthonormal frame tangent to $M.$ Since $A$ and $P_1=nHI-A$ are self-adjoint, we have
\[
\begin{split}
\tr(E\longmapsto P_1((\overline{\n}_Ef\overline{X})^T))&=\sum_{i=1}^n\lan P_1((\overline{\n}_{e_i}f\overline{X})^T),e_i\ran=\sum_{i=1}^n\lan\overline{\n}_{e_i}(f\overline{X}),P_1(e_i)\ran\\
&=\sum_{i=1}^n\lan \overline{\n}_{e_i}(fX^T) + \overline{\n}_{e_i}(\lan f\overline{X},\eta\ran\eta),P_1(e_i)\ran\\
&=\sum_{i=1}^n\lan \overline{\n}_{e_i}(fX^T),P_1(e_i)\ran - \lan f\overline{X},\eta\ran\sum_{i=1}^n\lan \eta,\overline{\n}_{e_i}(P_1(e_i))\ran\\
&=\sum_{i=1}^n\lan\overline{\n}_{e_i}(fX^T),P_1(e_i)\ran - f\lan\overline{X},\eta\ran\sum_{i=1}^n\lan A(e_i),P_1(e_i)\ran\\
&=\sum_{i=1}^n\lan\n_{e_i}(fX^T),P_1(e_i)\ran - f\lan\overline{X},\eta\ran \tr(A\circ P_1)\\
&=\sum_{i=1}^n\lan P_1(\n_{e_i}(fX^T)),e_i\ran - f\lan\overline{X},\eta\ran \tr(A\circ P_1)\\
&=\sum_{i=1}^n\lan \n_{e_i}(P_1(fX^T)),e_i\ran -\sum_{i=1}^n\lan (\n_{e_i}P_1)(fX^T),e_i\ran\\
&\qquad- f\lan\overline{X},\eta\ran\tr(A\circ P_1)\\
&=\di(P_1(fX^T)) - (\di P_1)(fX^T) - f\lan\overline{X},\eta\ran\tr(A\circ P_1).
\end{split}
\]
By using Gauss equation, we have
\[
\tr(A\circ P_1)=\tr(nHA-A^2) = nH\tr A - \tr A^2 = n^2H^2 - |A|^2 = n(n-1)(R-\kappa)
\]
and, since $\di P_1\equiv0,$ see \cite{reilly}, p. 470 and \cite{rosen}, p. 225, we have
\begin{equation}\label{eq.1}
\tr(E\longmapsto P_1((\overline{\n}_E\overline{X})^T))= \di(P_1(fX^T)) - n(n-1)(R-\kappa)f\lan\overline{X},\eta\ran.
\end{equation}
On the other hand, by using Lemma \ref{lemma-hessian}, we have
\begin{equation}\label{eq.2}
\begin{split}
\tr(E\longmapsto P_1((\overline{\n}_Ef\overline{X})^T)) &= \sum_{i=1}^n\lan\overline{\n}_{e_i}(f\overline{X}),P_1(e_i)\rangle\\
&=\sum_{i=1}^n \lan e_i(f)\overline{X} + f\overline{\n}_{e_i}\overline{X}, P_1(e_i)\ran\\
&=\sum_{i=1}^n \lan\overline{X},P_1(e_i(f)e_i)\ran + f \sum_{i=1}^n \lan\overline{\n}_{e_i}\overline{X},P_1(e_i)\ran\\
&=\lan\overline{X},P_1(\n f)\ran + f \tr(E\longmapsto P_1((\overline{\n}_E\overline{X})^T))\\
&=\lan\overline{X},P_1(\n f)\ran + n(n-1)Hf(\cosh\sqrt{-\kappa}\rho).
\end{split}
\end{equation}
Replacing (\ref{eq.2}) in (\ref{eq.1}) we obtain the result.
\end{proof}

\begin{lemma}\label{lemma-3}
Let $x:M^n\ria\h^{n+1}(\kappa), \ n\geq3,$ be a proper isometric immersion. Suppose $H>0$ and $R\geq\kappa.$ Let $\rho=\rho(p,\cdot)$ be the geodesic distance function of $\h^{n+1}(\kappa)$ starting at $p\in \h^{n+1}(\kappa).$ Let $h:\R\ria\R$ be a smooth function such that $h(t)=0$ for $t\leq0$ and $h(t)$ is increasing for $t>0.$ If $f:M\ria\R$ is any non negative locally integrable, $\mathcal{C}^1$ function, then for all $t>s>0,$
\[
\begin{split}
&\dfrac{1}{(\sinh\sqrt{-\kappa}t)^{\frac{n-1}{2}}}\!\!\int_M\!\!\!h(t-\rho)(\sinh\sqrt{-\kappa}\rho)fHdM\\
&\qquad - \dfrac{1}{(\sinh\sqrt{-\kappa}s)^{\frac{n-1}{2}}}\!\!\int_M\!\!\!h(s-\rho)(\sinh\sqrt{-\kappa}\rho)fHdM\\
&\geq\dfrac{1}{2}\int_s^t\!\!\!\dfrac{1}{(\sinh\sqrt{-\kappa}r)^{\frac{n-1}{2}}}\!\!\!\int_M \!\!\!h(r-\rho)(\sinh\sqrt{-\kappa}\rho)\!\!\left\lan\!\!\overline{\n}\rho,\frac{1}{n}P_1(\n f) + (n-1)(R-\kappa)f\eta\right\ran\!\! dMdr.\\
\end{split}
\]
\end{lemma}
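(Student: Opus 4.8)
The plan is to reduce the integrated inequality to a pointwise differential inequality in $r$ and then integrate. Write $\phi(r)=\int_M h(r-\rho)(\sinh\sqrt{-\kappa}\rho)fH\,dM$ and $g(r)=(\sinh\sqrt{-\kappa}r)^{\frac{n-1}{2}}$, so the left-hand side is $\phi(t)/g(t)-\phi(s)/g(s)$. First I would recast Proposition \ref{prop1}: since $\overline{X}=\frac{\sinh\sqrt{-\kappa}\rho}{\sqrt{-\kappa}}\overline{\n}\rho$, its first and third terms assemble into the bracket appearing in the statement, giving
\[
\di(P_1(fX^T))=\frac{n\sinh\sqrt{-\kappa}\rho}{\sqrt{-\kappa}}\left\lan\overline{\n}\rho,\tfrac1n P_1(\n f)+(n-1)(R-\kappa)f\eta\right\ran+n(n-1)fH\cosh\sqrt{-\kappa}\rho.
\]
Multiplying by $h(r-\rho)$ and integrating over $M$ expresses $\int_M h(r-\rho)\di(P_1(fX^T))\,dM$ as $\frac{n}{\sqrt{-\kappa}}\Psi(r)+n(n-1)\Phi(r)$, where $\Psi(r)$ is the inner integral on the right-hand side of the lemma and $\Phi(r)=\int_M h(r-\rho)fH\cosh\sqrt{-\kappa}\rho\,dM$.

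The second ingredient is an integration by parts for the same quantity. Since $x$ is proper and $h$ vanishes on $(-\infty,0]$, the field $h(r-\rho)P_1(fX^T)$ is compactly supported on $M$, so its divergence integrates to zero; with $\n(h(r-\rho))=-h'(r-\rho)\n\rho$ and $X^T=\frac{\sinh\sqrt{-\kappa}\rho}{\sqrt{-\kappa}}\n\rho$ (the tangential part of $\overline{X}$), this yields
\[
\int_M h(r-\rho)\di(P_1(fX^T))\,dM=\frac{1}{\sqrt{-\kappa}}\int_M h'(r-\rho)f(\sinh\sqrt{-\kappa}\rho)\lan\n\rho,P_1(\n\rho)\ran\,dM.
\]
The crux is then the pointwise estimate $0\le\lan\n\rho,P_1(\n\rho)\ran\le 2nH$. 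The lower bound is the semi-positive definiteness of $P_1$ recorded in the preliminaries. For the upper bound I would write $P_1=nHI-A$, use $|\n\rho|\le|\overline{\n}\rho|=1$, and invoke $R\ge\kappa$, which through $(nH)^2=|A|^2+n(n-1)(R-\kappa)$ gives $|k_i|\le nH$ and hence $-\lan\n\rho,A\n\rho\ran\le nH$; adding $nH|\n\rho|^2\le nH$ gives the claimed $2nH$. As $h'\ge0$ and $f\ge0$, the right-hand side is at most $\frac{2n}{\sqrt{-\kappa}}\phi'(r)$, and equating with the first expression and clearing constants yields $\Psi(r)+(n-1)\sqrt{-\kappa}\,\Phi(r)\le 2\phi'(r)$.

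To finish I would insert the elementary hyperbolic comparison $\cosh\sqrt{-\kappa}\rho\ge\coth(\sqrt{-\kappa}r)\sinh\sqrt{-\kappa}\rho$, valid on $\supp h(r-\rho)\subset\{\rho\le r\}$ since it rearranges to $\sinh(\sqrt{-\kappa}(r-\rho))\ge0$; because $f\ge0$ and $H>0$ this gives $\Phi(r)\ge\coth(\sqrt{-\kappa}r)\,\phi(r)$, so that $\Psi(r)\le 2\phi'(r)-(n-1)\sqrt{-\kappa}\coth(\sqrt{-\kappa}r)\phi(r)$. Recognizing that $\frac{g'(r)}{g(r)}=\frac{n-1}{2}\sqrt{-\kappa}\coth(\sqrt{-\kappa}r)$, the right-hand side equals $2g(r)\frac{d}{dr}\!\big(\phi/g\big)$, so $\frac{d}{dr}(\phi/g)\ge\frac{\Psi(r)}{2g(r)}$; integrating this over $[s,t]$ is exactly the asserted inequality. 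I expect the main obstacle to be the bookkeeping of constants — arranging that the factor $2$ in the bound $\lan\n\rho,P_1(\n\rho)\ran\le2nH$ produces precisely the $\tfrac12$ in the statement while the $\coth$ comparison matches the logarithmic derivative of $g$ — together with the analytic justification, via properness, of the divergence theorem and of differentiation under the integral sign when $f$ is merely $\mathcal{C}^1$ and $\rho$ is singular at $p$.
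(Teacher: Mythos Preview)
Your proposal is correct and follows essentially the same route as the paper. The only cosmetic difference is that the paper applies Proposition~\ref{prop1} directly to the product $h(r-\rho)f$ (so the $h'$ term appears in the divergence identity itself), whereas you apply it to $f$ alone and then integrate against $h(r-\rho)$ by parts; the resulting identity, the eigenvalue bound $0\le\lambda_i\le 2nH$, the $\coth$ comparison, and the recognition of the logarithmic derivative of $(\sinh\sqrt{-\kappa}r)^{(n-1)/2}$ are all identical to the paper's argument.
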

\begin{proof}
Applying Proposition \ref{prop1} to $h(r-\rho(x))f(x),$ we have
\begin{equation}\label{1}
\begin{split}
\di(P_1(h(r-\rho)fX^T)) &= -h'(r-\rho)f\lan \overline{X},P_1(\n\rho)\ran + h(r-\rho)\lan\overline{X},P_1(\n f)\ran\\
&\qquad +n(n-1)h(r-\rho)fH(\cosh\sqrt{-\kappa}\rho)\\
&\qquad + n(n-1)(R-\kappa)h(r-\rho)f\lan\overline{X},\eta\ran.\\
\end{split}
\end{equation}
Since $h(r-\rho)fX^T$ is supported in $M\cap B_r$ and $M$ is proper, then $h(r-\rho)fX^T$ is compactly supported on $M.$ Thus, by using divergence theorem, we have
\begin{equation}\label{2}
\int_M \di(P_1(h(r-\rho)fX^T))dM=0.
\end{equation} 
Integrating (\ref{1}) and by using (\ref{2}) above we have
\begin{equation}\label{3}
\begin{split}
\int_M h'(r-\rho)f\lan \overline{X}, P_1(\n\rho)\ran dM&=\int_M h(r-\rho)\lan\overline{X},P_1(\n f)\ran dM\\
&\qquad + n(n-1)\int_M h(r-\rho)fH(\cosh \sqrt{-\kappa}\rho)dM \\
&\qquad+ n(n-1)\int_M h(r-\rho)f(R-\kappa)\lan\overline{X},\eta\ran dM.
\end{split}
\end{equation}

Let $k_1,\ k_2, \ldots, k_n$ be the principal curvatures of the immersion and $\lambda_i=nH - k_i$ the eigenvalues of $P_1.$ From  $H>0$ and $R\geq\kappa,$ $P_1$ is semi-positive definite, that is, $\lambda_i\geq0$ ($i=1,2,\ldots,n$). Since
\[
\begin{split}
\lambda_i&=nH - k_i\leq nH + |k_i| \leq nH + \sqrt{k_1^2+k_2^2+\cdots+k_n^2}\\
         &\leq nH + |A| \leq nH + \sqrt{n^2H^2 - n(n-1)(R-\kappa)}\\
         &\leq 2nH,
\end{split}
\]
we have
\begin{equation}\label{4}
\begin{split}
\int_M h'(r-\rho)f\lan\overline{X},P_1(\n \rho)\ran dM &= \int_M h'(r-\rho)f\frac{(\sinh\sqrt{-\kappa}\rho)}{\sqrt{-\kappa}}\lan\overline{\n}\rho,P_1(\n\rho)\ran dM\\
&\leq 2n\int_M h'(r-\rho)f\frac{(\sinh\sqrt{-\kappa}\rho)}{\sqrt{-\kappa}}HdM\\
&=2n\dfrac{d}{dr}\left(\int_M h(r-\rho)f\frac{(\sinh\sqrt{-\kappa}\rho)}{\sqrt{-\kappa}}HdM\right).
\end{split}
\end{equation}
From (\ref{3}) and (\ref{4}) we obtain
\[
\begin{split}
2n&\dfrac{d}{dr}\left(\int_Mh(r-\rho)f\dfrac{(\sinh\sqrt{-\kappa}\rho)}{\sqrt{-\kappa}} H dM\right)\geq \int_M h(r-\rho)\dfrac{(\sinh\sqrt{-\kappa}\rho)}{\sqrt{-\kappa}}\lan\overline{\n}\rho,P_1(\n f)\ran dM\\
&\qquad\qquad\qquad\qquad\qquad\qquad+n(n-1)\int_M h(r-\rho)fH(\cosh\sqrt{-\kappa}\rho)dM\\
&\qquad\qquad\qquad\qquad\qquad\qquad+n(n-1)\int_M h(r-\rho)\dfrac{(\sinh\sqrt{-\kappa}\rho)}{\sqrt{-\kappa}}(R-\kappa)\lan\overline{\n}\rho,\eta\ran dM.\\
\end{split}
\]
Since $\coth x$ is a decreasing function, we can estimate the second integral in the right hand side of inequality above by
\[
\int_M h(r-\rho)fH(\cosh\sqrt{-\kappa}\rho)dM > \sqrt{-\kappa}(\coth\sqrt{-\kappa}r)\int_M h(r-\rho)\dfrac{(\sinh\sqrt{-\kappa}\rho)}{\sqrt{-\kappa}}\ Hf dM,
\]
which implies
\[
\begin{split}
\dfrac{d}{dr}&\left(\int_M h(r-\rho)f\dfrac{(\sinh\sqrt{-\kappa}\rho)}{\sqrt{-\kappa}} H dM\right)\\
&\geq\dfrac{n-1}{2}\sqrt{-\kappa}(\coth\sqrt{-\kappa}r)\int_M h(r-\rho)f\dfrac{(\sinh\sqrt{-\kappa}\rho)}{\sqrt{-\kappa}} H dM\\
&\qquad+\dfrac{1}{2}\int_M h(r-\rho)\dfrac{(\sinh\sqrt{-\kappa}\rho)}{\sqrt{-\kappa}}\left\lan\overline{\n}\rho,\frac{1}{n}P_1(\n f) + (n-1)(R-\kappa)f\eta\right\ran dM.
\end{split}
\]
Since
\[
\begin{split}
\left(\dfrac{\sinh\sqrt{-\kappa}\rho}{\sqrt{-\kappa}}\right)^{\frac{n-1}{2}}&\dfrac{d}{dr}\left(\left(\dfrac{\sinh\sqrt{-\kappa}\rho}{\sqrt{-\kappa}}\right)^{-\frac{n-1}{2}}\int_M h(r-\rho)\dfrac{(\sinh\sqrt{-\kappa}\rho)}{\sqrt{-\kappa}}HfdM\right)\\
&=-\frac{n-1}{2}\sqrt{-\kappa}(\coth\sqrt{-\kappa}r)\int_M h(r-\rho)\dfrac{(\sinh\sqrt{-\kappa}\rho)}{\sqrt{-\kappa}}Hf dM\\
&\qquad+ \dfrac{d}{dr}\left(\int_M h(r-\rho)\dfrac{(\sinh\sqrt{-\kappa}\rho)}{\sqrt{-\kappa}} Hf dM\right),
\end{split}
\]
we have
\[
\begin{split}
\left(\dfrac{\sinh\sqrt{-\kappa}\rho}{\sqrt{-\kappa}}\right)^{\frac{n-1}{2}}&\dfrac{d}{dr}\left(\left(\dfrac{\sinh\sqrt{-\kappa}\rho}{\sqrt{-\kappa}}\right)^{-\frac{n-1}{2}}\int_M h(r-\rho)\dfrac{(\sinh\sqrt{-\kappa}\rho)}{\sqrt{-\kappa}}HfdM\right)\\
&\geq\dfrac{1}{2}\int_M\!\!\! h(r-\rho)\dfrac{(\sinh\sqrt{-\kappa}\rho)}{\sqrt{-\kappa}}\left\lan\overline{\n}\rho,\frac{1}{n}P_1(\n f) + (n-1)(R-\kappa)f\eta\right\ran dM.
\end{split}
\]
Dividing expression above by $\left(\dfrac{\sinh\sqrt{-\kappa}\rho}{\sqrt{-\kappa}}\right)^{\frac{n-1}{2}}\times\left(\sqrt{-\kappa}\right)^{\frac{n-3}{2}}$ and integrating on $r$ from $s$ to $t$ we obtain the result
\[
\begin{split}
&\dfrac{1}{(\sinh\sqrt{-\kappa}t)^{\frac{n-1}{2}}}\!\!\int_M\!\!\!h(t-\rho)(\sinh\sqrt{-\kappa}\rho)fHdM\\
&\qquad - \dfrac{1}{(\sinh\sqrt{-\kappa}s)^{\frac{n-1}{2}}}\!\!\int_M\!\!\!h(s-\rho)(\sinh\sqrt{-\kappa}\rho)fHdM\\
&\geq\dfrac{1}{2}\int_s^t\!\!\!\dfrac{1}{(\sinh\sqrt{-\kappa}r)^{\frac{n-1}{2}}}\!\!\!\int_M \!\!\!h(r-\rho)(\sinh\sqrt{-\kappa}\rho)\!\!\left\lan\!\!\overline{\n}\rho,\frac{1}{n}P_1(\n f) + (n-1)(R-\kappa)f\eta\right\ran\!\! dMdr.\\
\end{split}
\]

\end{proof}

\section{Proof of Theorem \ref{Main-Theorem}.}

\begin{proof}[Proof of Theorem \ref{Main-Theorem}.]
Choosing $f\equiv1$ in the inequality of Lemma \ref{lemma-3}, we have, for every $t>s>0,$
\[
\begin{split}
&\dfrac{1}{(\sinh\sqrt{-\kappa}t)^{\frac{n-1}{2}}}\int_M \!\!\!h(t-\rho)(\sinh\sqrt{-\kappa}\rho)HdM\!\\
&\qquad\qquad\qquad-\!\dfrac{1}{(\sinh\sqrt{-\kappa}s)^{\frac{n-1}{2}}}\int_M \!\!\!h(s-\rho)(\sinh\sqrt{-\kappa}\rho)HdM\\
&\qquad\geq\dfrac{1}{2}\int_s^t\dfrac{1}{(\sinh\sqrt{-\kappa}r)^{\frac{n-1}{2}}}\int_M \!\!\!h(r-\rho)(\sinh\sqrt{-\kappa}\rho)\left\lan\overline{\n}\rho,(n-1)(R-\kappa)\eta\right\ran dMdr.\\
&\qquad\geq-\dfrac{1}{2}\int_s^t\dfrac{1}{(\sinh\sqrt{-\kappa}r)^{\frac{n-1}{2}}}\int_M \!\!\!h(r-\rho)(\sinh\sqrt{-\kappa}\rho)(n-1)(R-\kappa)dMdr.\\
&\qquad\geq-\dfrac{\Gamma}{2}\int_s^t\dfrac{1}{(\sinh\sqrt{-\kappa}r)^{\frac{n-1}{2}}}\int_M \!\!\!h(r-\rho)(\sinh\sqrt{-\kappa}\rho)H dMdr.\\
\end{split}
\]
Letting $\displaystyle{g(r)=\dfrac{1}{(\sinh\sqrt{-\kappa}r)^{\frac{n-1}{2}}}\int_M \!\!\!h(r-\rho)(\sinh\sqrt{-\kappa}\rho)HdM},$ inequality above becomes
\[
g(t)-g(s)\geq - \dfrac{\Gamma}{2}\int_s^t g(r)dr,
\]
which implies
\[
g'(t)\geq -\dfrac{\Gamma}{2}g(t),
\]
i.e.,
\[
\dfrac{d}{dt}\left(e^{\frac{\Gamma}{2}t}g(t)\right)\geq 0
\]
and thus \[
\displaystyle{e^{\frac{\Gamma}{2}r}g(r)=e^{\frac{\Gamma}{2}r}(\sinh\sqrt{-\kappa}r)^{-\frac{n-1}{2}}\int_M \!\!\!h(r-\rho)(\sinh\sqrt{-\kappa}\rho)HdM}\]
is monotone non-decreasing. Now, let us apply this result to the sequence of smooth functions $h_m:\R\ria\R$ such that $h_m(t)=0$ for $t\leq0,$ $h_m(t)=1$ for $t\geq\frac{1}{m}$ and $h_m$ is increasing for $t\in(0,\frac{1}{m}).$ Taking $m\ria\infty,$ sequence $h_m$ tends to the characteristic function of $(0,\infty)$ and the first part of the theorem follows.

To prove that $\displaystyle{\int_M H dM =\infty}$ for $\Gamma<(n-3)\sqrt{-\kappa},$ notice that monotonicity of $\varphi(r)$ implies
\[
\int_{M\cap B_r}(\sinh\sqrt{-\kappa}\rho)HdM\geq e^{\frac{\Gamma}{2}(r_0-r)}\left(\dfrac{\sinh\sqrt{-\kappa}r}{\sinh\sqrt{-\kappa}r_0}\right)^{\frac{n-1}{2}}\int_{M\cap B_{r_0}}(\sinh\sqrt{-\kappa}\rho)HdM,
\]
for all $r>r_0>0.$ Since $\sinh x$ is an increasing function, we have
\[
\int_{M\cap B_r}(\sinh\sqrt{-\kappa}\rho)HdM\leq (\sinh\sqrt{-\kappa}r)\int_{M\cap B_r}HdM,
\]
which implies
\[
\int_{M\cap B_r}HdM\geq \dfrac{(\sinh\sqrt{-\kappa}r)^{\frac{n-3}{2}}}{e^{\frac{\Gamma}{2}r}}\times\dfrac{e^{\frac{\Gamma}{2}r_0}}{(\sinh\sqrt{-\kappa}r_0)^{\frac{n-1}{2}}}\int_{M\cap B_{r_0}}(\sinh\sqrt{-\kappa}\rho)HdM.
\]
Since $\sinh\sqrt{-\kappa}r=\frac{1}{2}(e^{\sqrt{-\kappa}r} - e^{-\sqrt{-\kappa}r}),$ taking $r\ria\infty,$ and by using that $\Gamma<(n-3)\sqrt{-\kappa},$ we obtain $\displaystyle{\int_M H dM =\infty.}$
\end{proof}

\begin{bibdiv}
\begin{biblist}

\bib{AdCS}{article}{
   author={Alencar, Hil{\'a}rio},
   author={do Carmo, Manfredo},
   author={Santos, Walcy},
   title={A gap theorem for hypersurfaces of the sphere with constant scalar
   curvature one},
   journal={Comment. Math. Helv.},
   volume={77},
   date={2002},
   number={3},
   pages={549--562},
   issn={0010-2571},
   review={\MR{1933789 (2003m:53098)}},
   doi={10.1007/s00014-002-8351-1},
}

\bib{A-F}{article}{
   author={Alencar, Hil{\'a}rio},
   author={Frensel, Katia},
   title={Hypersurfaces whose tangent geodesics omit a nonempty set},
   conference={
      title={Differential geometry},
   },
   book={
      series={Pitman Monogr. Surveys Pure Appl. Math.},
      volume={52},
      publisher={Longman Sci. Tech., Harlow},
   },
   date={1991},
   pages={1--13},
   review={\MR{1173029 (93f:53048)}},
}

\bib{A-S-Z}{article}{
   author={Alencar, Hil{\'a}rio},
   author={Santos, Walcy},
   author={Zhou, Detang},
   title={Curvature integral estimates for complete hypersurfaces},
   journal={Illinois J. Math.},
   volume={55},
   date={2011},
   number={1},
   pages={185--203 (2012)},
   issn={0019-2082},
   review={\MR{3006685}},
}

%
%

\bib{J-K}{article}{
   author={Jorge, L.},
   author={Koutroufiotis, D.},
   title={An estimate for the curvature of bounded submanifolds},
   journal={Amer. J. Math.},
   volume={103},
   date={1981},
   number={4},
   pages={711--725},
   issn={0002-9327},
   review={\MR{623135 (83d:53041b)}},
   doi={10.2307/2374048},
}

\bib{reilly}{article}{
   author={Reilly, Robert C.},
   title={Variational properties of functions of the mean curvatures for
   hypersurfaces in space forms},
   journal={J. Differential Geometry},
   volume={8},
   date={1973},
   pages={465--477},
   issn={0022-040X},
   review={\MR{0341351 (49 \#6102)}},
}

\bib{rosen}{article}{
   author={Rosenberg, Harold},
   title={Hypersurfaces of constant curvature in space forms},
   journal={Bull. Sci. Math.},
   volume={117},
   date={1993},
   number={2},
   pages={211--239},
   issn={0007-4497},
   review={\MR{1216008 (94b:53097)}},
}

\end{biblist}
\end{bibdiv}
\end{document}